\newtheorem{theorem}{Theorem}[section]
\newtheorem{lemma}{Lemma}[section]
\theoremstyle{definition}
{}
\theoremstyle{remark} 
\newtheorem{remark}{Remark}[section]
\newcommand{\pbp}{\mathcal{\partial \bar{\partial}}}
\newcommand{\spbp}{\mathcal{\sqrt{-1}\partial \bar{\partial}}}
\newcommand{\ddc}{\frac{\sqrt{-1}}{2\pi}\mathcal{\partial \bar{\partial}}} 
\numberwithin{equation}{section}
\begin{document}
\title[Representability of Chern-Weil forms]{Representability of Chern-Weil forms}
\author{Vamsi Pritham Pingali}
\address{Department of Mathematics, Indian Institute of Science, Bangalore, India - 560012}
\email{vamsipingali@math.iisc.ernet.in}
\begin{abstract}
In this paper we look at two naturally occurring situations where the following question arises. When one can find a metric so that a Chern-Weil form can be represented by a given form ? The first setting is semi-stable Hartshorne-ample vector bundles on complex surfaces where we provide evidence for a conjecture of Griffiths by producing metrics whose Chern forms are positive. The second scenario deals with a particular rank-2 bundle (related to the vortex equations) over a product of a Riemann surface and the sphere.
\end{abstract}
\maketitle
\section{Introduction}
\indent In \cite{Pin} the author introduced a geometric question : when one can find a metric so that a Chern-Weil form can be represented by a given form in the correct cohomology class ? Apart from a few very special cases (all of which involve conformally changing a given metric to another one) the answer is not known in general. \\
\indent In this paper we study this question in the context of two natural situations in algebraic geometry and mathematical physics. The first of these is related to a conjecture of Griffiths dealing with Hartshorne-ample vector bundles. A holomorphic vector bundle $E$ over a compact complex manifold $X$ is said to be Hartshorne-ample if the canonical line bundle $\mathcal{O}_E(1)$ over $\mathbb{P}(E)$ is ample. If $E$ admits a metric whose curvature $\Theta$ is Griffiths-positive, i.e., $\langle v, \Theta v\rangle$ is a positive $(1,1)$-form for all vectors $v \in E$, then it is easy to see that it is Hartshorne-ample. The converse is the Griffiths conjecture. In the case of curves this was proven in \cite{Um,Cam}. Somewhat strong evidence for this conjecture in the general case is provided by the fact that if $E$ is Hartshorne-ample, then $E\otimes \det(E)$ is Nakano-positive (stronger than Griffiths positive) \cite{Dem, Mou}. It is also known \cite{BG, FL} that the Schur polynomials of Hartshorne-ample bundles are numerically positive (and in fact the only numerically positive characteristic classes are positive linear combinations of the Schur polynomials \cite{FL}). In the case of bundles over surfaces, this means that $c_1^2, c_2, c_1, c_1^2-c_2$ are numerically positive (and generate the ``numerically positive cone"). It is but natural to ask whether there is a metric on $E$ such that the corresponding Chern-Weil forms are positive. The following theorem addresses this question in the case of compact complex surfaces.
\begin{theorem}
Let $X$ be a compact complex surface and $E$ be a Hartshorne-ample rank-$r$ holomorphic vector bundle on it. Assume that $E$ is semi-stable with respect to some polarisation $[L]$. Then there exists a hermitian metric $h$ on $E$ whose Chern-Weil forms satisfy $c_1 (h)>0, c_2(h)>0, c_1^2(h)- c_2(h)>0$.
\label{stable}
\end{theorem}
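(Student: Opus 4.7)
The plan is to combine the Kobayashi--Hitchin correspondence with a conformal normalization and then verify the positivity of the two (2,2)-form Chern-Weil expressions via explicit algebraic identities.

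\textbf{Reduction to a K\"ahler setting and the HE metric.} By Bloch--Gieseker and Fulton--Lazarsfeld, Hartshorne-ampleness of $E$ forces numerical positivity of $c_1(E)$; on a surface Nakai--Moishezon then implies $\det E$ is ample, so $X$ is projective and $c_1(E)$ contains a K\"ahler form $\omega$. Fix also a K\"ahler form $\omega_L \in c_1(L)$. Since $E$ is $[L]$-semistable, the Donaldson--Uhlenbeck--Yau theorem (or, in the strictly semi-stable case, its variant producing approximate Hermite--Einstein metrics) yields a Hermitian metric $h_0$ satisfying $\sqrt{-1}\Lambda_{\omega_L}F_{h_0}=\lambda\,I$. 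Decompose $F_{h_0}=\frac{1}{r}\Tr(F_{h_0})\,I+F^0$; the HE equation makes the trace-free part $F^0$ primitive with respect to $\omega_L$. Now conformally rescale $h=e^\phi h_0$: such a change leaves $F^0$ unaltered and shifts only the trace part. By the $\pbp$-lemma, choose $\phi$ so that $c_1(h)=\omega$ pointwise, which delivers the first required positivity $c_1(h)>0$.

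\textbf{Chern-Weil identities and $c_2(h)>0$.} Expanding $c(h)=\det\bigl(I+\frac{\sqrt{-1}}{2\pi}F_h\bigr)$ and using $F_h=A\,I+F^0$ with $A=\frac{1}{r}\Tr(F_h)$ yields
\begin{align*}
c_2(h) &= \frac{r-1}{2r}\,c_1(h)^2+\frac{1}{8\pi^2}\Tr(F^0\wedge F^0),\\
c_1^2(h)-c_2(h) &= \frac{r+1}{2r}\,c_1(h)^2-\frac{1}{8\pi^2}\Tr(F^0\wedge F^0).
\end{align*}
For an $\omega_L$-primitive, Hermitian-symmetric, matrix-valued $(1,1)$-form $F^0$, a local computation (diagonal entries are squares of imaginary primitive $(1,1)$-forms, which are non-negative as top forms; off-diagonal entries contribute $-\beta\wedge\overline\beta$ for primitive $\beta$, again non-negative) shows $\Tr(F^0\wedge F^0)\geq 0$ pointwise. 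Hence with $c_1(h)=\omega$ K\"ahler, $c_2(h)\geq \frac{r-1}{2r}\omega^2>0$ for $r\geq 2$, establishing positivity of $c_2(h)$.

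\textbf{Main obstacle.} The remaining statement $c_1^2(h)-c_2(h)>0$ is the pointwise inequality $\frac{r+1}{2r}\omega^2 > \frac{1}{8\pi^2}\Tr(F^0\wedge F^0)$. Integrating recovers $c_1(E)^2>c_2(E)$, which is precisely the Fulton--Lazarsfeld numerical positivity of the Schur class $c_1^2-c_2$, so the inequality holds \emph{on average}; promoting it to a pointwise bound is the delicate heart of the argument. The plan is to exploit the residual freedom -- the choice of K\"ahler representatives $\omega\in c_1(E)$ and $\omega_L\in c_1(L)$, combined with Uhlenbeck--Yau $C^0$ estimates that furnish a uniform pointwise bound on $|F^0|^2_{\omega_L}$ -- to convert the averaged inequality into a pointwise one. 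For the polystable case $E=\bigoplus E_i$, each stable factor $E_i$ is itself Hartshorne-ample as a quotient of $E$, and a block-diagonal HE metric reduces the problem to each $E_i$ separately; for a non-polystable strictly semi-stable $E$, one constructs an explicit extension metric from metrics on the graded pieces and appeals to the openness of the positivity conditions under small $C^0$-perturbations of the metric.
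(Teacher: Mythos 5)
Your setup coincides with the paper's: the conformal change moves only the trace part of the curvature, and your two identities are exactly the decomposition the paper uses (its quantity $(r-1)c_1^2(h)-2rc_2(h)=-\frac{r}{4\pi^2}\Tr(F^0\wedge F^0)$ is conformally invariant, and its formula for $c_1^2(h)-c_2(h)$ is your second identity). Your pointwise bound $\Tr(F^0\wedge F^0)\geq 0$ is the Kobayashi--L\"ubke step; note that for a merely semi-stable $E$ there is in general no exact Hermite--Einstein metric, only Kobayashi's approximate ones, so you only get $\Tr(F^0\wedge F^0)\geq -C\epsilon\,\omega_L^2$, which is still enough. The genuine gap is precisely the step you flag as the ``delicate heart'': $c_1^2(h)-c_2(h)>0$. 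The remedies you sketch do not close it. The Hermite--Einstein condition (and the Uhlenbeck--Yau/Donaldson machinery) controls only $\Lambda_{\omega_L}F$, not the pointwise norm of the full curvature; there is no uniform $C^0$ bound on $|F^0|^2$ available, and even granting some bound it carries no smallness, while $\omega$ is constrained to the fixed class $c_1(E)$, so nothing forces $\frac{r+1}{2r}\omega^2>\frac{1}{8\pi^2}\Tr(F^0\wedge F^0)$ pointwise. The polystable/extension reduction does not help either: each stable summand with its exact Hermite--Einstein metric poses the identical pointwise problem, and openness under small $C^0$ perturbations addresses only the polystable-versus-strictly-semistable issue, not this inequality.

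What is missing is a different use of the conformal freedom, together with the one consequence of Hartshorne-ampleness your argument never invokes: pushing forward the $(r+1)$-st power of a positive curvature form of $\mathcal{O}_E(1)$ from $\mathbb{P}(E)$ gives a positive representative $\eta$ of the class $c_1^2-c_2$ (the same mechanism gives the K\"ahler form in $c_1(E)$ you use). Since $\Tr(F^0\wedge F^0)$ is unchanged by $h=h_0e^{-\phi}$, your second identity turns the requirement $c_1^2(h)-c_2(h)=\eta$ into the complex Monge--Amp\`ere equation $\frac{r(r+1)}{2}\bigl(\ddc\phi+\frac{c_1(h_0)}{r}\bigr)^2=\eta+\frac{1}{8\pi^2}\Tr(F^0\wedge F^0)$, whose right-hand side is positive for small $\epsilon$ by the approximate Kobayashi--L\"ubke bound and whose two sides have equal integrals (your integrated identity). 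Yau's theorem then produces $\phi$ with $c_1(h)>0$, the third inequality holds by construction since $c_1^2(h)-c_2(h)=\eta$, and $c_2(h)=\frac{r-1}{r+1}\eta+\frac{2r}{r+1}\cdot\frac{1}{8\pi^2}\Tr(F^0\wedge F^0)>0$ for $\epsilon$ small. In short: do not prescribe $c_1(h)=\omega$ and then hope to dominate $\Tr(F^0\wedge F^0)$ pointwise; prescribe the second Segre form $c_1^2(h)-c_2(h)$ itself via Yau's theorem, which is exactly how the paper proceeds.
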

\begin{remark}
It is worth noting that the assumption of semi-stability is actually quite natural in this context. The reason is that Umemura's proof \cite{Um} in the case of curves uses the concept of stability.
\label{firstrem}
\end{remark}
\indent The second question we deal with arises from the gravitating vortex equations, which are themselves a special case of the K\"ahler-Yang-Mills equations studied in \cite{Garcia1, Garcia2, Garcia3}. The K\"ahler-Yang-Mills equations for a metric $H$ on a bundle $E$ and a K\"ahler metric $\omega$ on a manifold $X$ are
\begin{gather}
\sqrt{-1} \Theta \wedge \omega^{n-1} = \lambda \omega^n \ Id \nonumber \\
S_{\omega} - c = \alpha \frac{\mathrm{ch}_2 (E,H) \wedge \omega^{n-2}}{\omega^n}
\label{origKYM}
\end{gather}
where $c$ is a constant, $S_{\omega}$ the scalar curvature of $\omega$, $\Theta$ is the curvature of $H$, and $ch_2(E,H)$ is the second Chern character form. These equations admit a moment map interpretation just like the usual Hermite-Einstein equation and the constant scalar curvature K\"ahler (cscK) equation \cite{Garcia1}. Special cases of these equations have been studied. In particular, in \cite{Garcia2} a perturbation result (theorem 1.1) around $\alpha=0$ was proven for $\Sigma\times\mathbb{P}^1$ (where $\Sigma$ is a genus $\geq 1$ Riemann surface) equipped with a certain $SU(2)$-invariant rank $2$-vector bundle (that we dub ``the vortex bundle"). For $\mathbb{P}^1\times \mathbb{P}^1$ an obstruction was found (akin to K-stability). Solving this equation in general is obviously quite challenging. However, taking cue from the Calabi volume conjecture which is easier than the problem of cscK metrics, intersects with cscK metrics in the Ricci flat case,  and has no obstruction, we propose to study the following Calabi-Yang-Mills equations. 
\begin{gather}
\sqrt{-1} \Theta \wedge \omega^{n-1} = \lambda \omega^n \ Id \nonumber \\
\omega^n - \eta = \alpha \mathrm{ch}_2 (E,H) \wedge \omega^{n-2}
\label{CYMorig}
\end{gather}
where $\eta>0$ is a given volume form in the right cohomology class. Note that if $\alpha=0$ this is simply the usual Calabi conjecture. In this paper, the following existence result is proven for these equations.
\begin{theorem}
Let $\Sigma$ be a Riemann surface and $X= \Sigma \times \mathbb{P}^1$. Let $SU(2)$ act trivially on $X$ and in the standard manner on $\mathbb{P}^1=SU(2)/U(1)$. Let $L$ be a holomorphic line bundle on $\Sigma$ and $E$ be a rank $2$ holomorphic bundle over $X$ which is an extension :
\begin{gather}
0 \rightarrow \pi_1^{*} L\rightarrow E \rightarrow \pi_2^{*}\mathcal{O}(2) \rightarrow 0. 
\label{ext}
\end{gather}
Let $\tau>0$ be a constant, $\omega_{FS}= \frac{idz\wedge d\bar{z}}{(1+\vert z\vert^2)^2}$ the Fubini-Study metric on $\mathbb{P}^1$, and $\omega_{\Sigma}$ a metric on $\Sigma$. Denote by $\Omega=\pi_1 ^{*} \omega_{\Sigma}+\frac{4}{\tau} \pi_2 ^{*} \omega_{FS}$ an $SU(2)$-invariant K\"ahler form on $X$ where $\displaystyle \int _{\Sigma} \omega_{\Sigma} = \mathrm{Vol}(\Sigma)$ is fixed, by $H$ an $SU(2)$-invariant hermitian metric on $E$, and by $\Theta$ the curvature of $H$. Also let $\alpha$ be a constant, $ch_2(E,H) = \frac{1}{2} \mathrm{tr}\left(\frac{\sqrt{-1} \Theta}{2\pi}\right )^2$ be the second Chern character form, and $\eta$ be an $SU(2)$-invariant $(2,2)$-form on $X$.
The following statements hold. 
\begin{enumerate}
\item For any  $\Omega>0$ and $\eta>0$ satisfying  $\displaystyle \int _X \Omega^2 = \int _X \eta$, there exists an $H$ such that $\Omega^2 + \alpha ch_2(E,H) = \eta$. 
\item  Assume that $0<c_1(L)<\frac{\tau \mathrm{Vol}(\Sigma)}{4\pi}$. Suppose we are given an $\eta>0$ satisfying  $\displaystyle \int _X \Omega^2 = \int _X \eta$. Then the set of $\alpha \geq 0$ satisfying 
\begin{gather}
8+\frac{2\alpha\tau}{(2\pi)^2} \left [2\lambda - \frac{\tau}{2} \right ]>0
\label{satisfaction}
\end{gather}
 for which there exists a smooth form $\Omega_{\alpha}>0$ and a smooth metric $H_{\alpha}$ such that the Calabi-Yang-Mills equations are satisfied, i.e.,
\begin{gather}
\sqrt{-1}\Theta_{\alpha}\wedge \Omega_{\alpha} = \lambda \Omega_{\alpha} ^2  Id\nonumber \\
\Omega_{\alpha}^2 + \alpha ch_2(E,H_{\alpha}) = \eta, 
\label{pKYM}
\end{gather}
contains $\alpha=0$ and is open. Moreover, for $\alpha=0$ the solution is essentially unique among all $SU(2)$-invariant solutions.
\end{enumerate} 
\label{KYM}
\end{theorem}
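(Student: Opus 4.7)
The plan is to exploit the $SU(2)$-symmetry to reduce everything to equations on the Riemann surface $\Sigma$, combine classical results (Yau's theorem for the Calabi problem and Donaldson--Uhlenbeck--Yau for Hermite--Einstein metrics) to produce a solution at $\alpha=0$, and then propagate to nearby values of $\alpha$ via the implicit function theorem.

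For part (1), an $SU(2)$-invariant Hermitian metric $H$ on the extension $E$ is parametrized by a small collection of scalar functions on $\Sigma$: the norms on $\pi_1^*L$ and on an orthogonal complement of it, together with a measurement of the second fundamental form / extension class. Writing out $\mathrm{ch}_2(E,H)$ in these coordinates should present the equation $\Omega^2+\alpha\,\mathrm{ch}_2(E,H)=\eta$ as a prescribed $\partial\bar\partial$-equation for a single scalar function on $\Sigma$, which can be solved by Hodge theory / the Fredholm alternative once the integral compatibility $\int_X\Omega^2=\int_X\eta$ is used to kill the topological obstruction.

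For part (2) I first deal with $\alpha=0$, where the two equations decouple. The second reduces to $\Omega_0^2=\eta$, and under $SU(2)$-invariance this collapses to prescribing $\omega_\Sigma$ from the pushforward of $\eta$ to $\Sigma$ (Yau's theorem is essentially trivial here because everything reduces to determining a volume form on a curve). The first is the ordinary Hermite--Einstein equation for $E$ with respect to $\Omega_0$; a short slope computation, using $\int_{\mathbb{P}^1}\omega_{FS}=2\pi$, shows that the hypothesis $0<c_1(L)<\tau\,\mathrm{Vol}(\Sigma)/(4\pi)$ is precisely $\mu_{\Omega_0}(\pi_1^*L)<\mu_{\Omega_0}(E)$, so $E$ is slope-stable and Donaldson--Uhlenbeck--Yau supplies $H_0$. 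Uniqueness among $SU(2)$-invariant solutions at $\alpha=0$ then follows from the uniqueness of solutions to the Calabi problem in a fixed K\"ahler class together with the uniqueness (up to a constant scaling) of Hermite--Einstein metrics on a stable bundle.

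For openness I would apply the implicit function theorem in suitable H\"older spaces to the map
\begin{equation*}
(H,\omega_\Sigma,\alpha)\longmapsto \Bigl(\sqrt{-1}\,\Theta\wedge\Omega-\lambda\,\Omega^2\,Id,\ \Omega^2+\alpha\,\mathrm{ch}_2(E,H)-\eta\Bigr),
\end{equation*}
restricted to the $SU(2)$-invariant sector with normalizations fixing $\int_\Sigma\omega_\Sigma$ and the overall constant scaling of $H$. The linearization couples $\delta H$ with $\delta\omega_\Sigma$ through the $\alpha\,\mathrm{ch}_2$ term and assembles into a second-order elliptic system on $\Sigma$. The main obstacle I anticipate is showing that this linearization is invertible on the quotient by the symmetries: after integration by parts one obtains a self-adjoint operator, and the hypothesis \eqref{satisfaction} should be exactly the positivity condition that makes the associated quadratic form coercive (the bracket $8+\frac{2\alpha\tau}{(2\pi)^2}[2\lambda-\tau/2]$ is the scalar that appears as the diagonal coefficient when one decouples the two equations modulo constants). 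Once coercivity is established, self-adjointness identifies kernel with cokernel, the linearization is an isomorphism, and the Banach-space inverse function theorem produces smooth solutions for all $\alpha$ in an open neighborhood of $0$ within the region defined by \eqref{satisfaction}.
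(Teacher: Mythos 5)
The weak point is your openness argument, and the gap is real. You claim that ``after integration by parts one obtains a self-adjoint operator'' and that the hypothesis $8+\frac{2\alpha\tau}{(2\pi)^2}\left[2\lambda-\frac{\tau}{2}\right]>0$ is a coercivity condition, so that kernel equals cokernel and the linearization is invertible. Neither claim survives contact with the actual reduced system on $\Sigma$. Writing the invariant data as scalar unknowns (the conformal factor of $h$, the K\"ahler potential of $\omega_\Sigma$, and the factor $f_2$), the linearized operator is a genuinely non-symmetric second-order $3\times 3$ system; the displayed inequality enters only as positivity of the determinant of its principal symbol, i.e.\ as ellipticity and hence Fredholmness, and even for that one needs the a priori pointwise bound $\vert\phi\vert_h^2\le\tau$, which is obtained by a maximum-principle argument applied to the vortex-type equation $\sqrt{-1}\Theta_h+\frac{\vert\phi\vert_h^2-\tau}{2}\omega_\Sigma=0$ satisfied along the path. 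Your proposal never produces this bound. Surjectivity then does not come for free from self-adjointness: one must show the formal adjoint has trivial kernel, and in the paper this is the technical heart of the proof --- the adjoint system is reduced to two scalar equations in $q=u-v$ and $w$, and killed by multiplying by $q$, integrating by parts, and applying a Cauchy--Schwarz inequality with a carefully chosen weight function $g$, using both $\vert\phi\vert_h^2\le\tau$ and the hypothesis on $\alpha$. Moreover the theorem asserts openness of the whole solution set inside the region where the inequality on $\alpha$ holds, not merely near $\alpha=0$ (where the system decouples and linear analysis is easy); at a general $\alpha$ one must also check that $\omega_\Sigma>0$ persists (another pointwise argument using the third equation) and carry out a regularity/bootstrap step for the semilinear system. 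None of these ingredients is identified in your outline, and the mechanism you propose in their place (self-adjointness plus coercivity) is not available.

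Two smaller points. At $\alpha=0$, your ``short slope computation'' only tests the subsheaf $\pi_1^*L$; slope stability, which is what Donaldson--Uhlenbeck--Yau needs, requires controlling all rank-one subsheaves, and this uses the non-splitness of the extension (equivalently $\phi\neq 0$) --- it is precisely the stability statement from Garc\'ia-Prada's work that the paper cites rather than re-proves. Your uniqueness-up-to-scale argument for the Hermite--Einstein metric is a legitimate alternative to the paper's direct maximum-principle proof, provided you also note that uniqueness forces the DUY metric to be $SU(2)$-invariant and that the scaling freedom is absorbed by the normalization of $f_2$. Part (1) is fine in spirit but is even simpler than the Hodge-theoretic scheme you describe: since the unknown metric enters the equation only through $\sqrt{-1}\partial\bar\partial$ of a single scalar combination ($\vert\phi\vert_h^2+2\tau\ln f_2$), once a potential $u$ for $\frac{8}{\tau}(\omega_\Sigma-f)$ is fixed the equation is solved by a pointwise (algebraic) choice of $f_2$, with no linear PDE to invert.
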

Notice that unlike the case of the K\"ahler-Yang-Mills equations, there seems to be no difference between the case of $g=0$ and $g\geq 1$. Moreover, the result is more general than the corresponding one in \cite{Garcia2} for the K\"ahler-Yang-Mills equations. We plan on  exploring these equations further in future work.\\

\emph{Acknowledgements} : The author is grateful to Harish Seshadri for useful suggestions, as well as for his support and encouragement. We also thank M.S. Narasimhan for pointing out the existence of approximate Hermite-Einstein metrics on semi-stable bundles, and Mario Garcia-Fernandez for answering questions about his paper. Lastly, our gratitude extends copiously to the anonymous referee for a careful reading of this work.
\section{Semi-stable ample bundles on surfaces}
In this section we study stable Hartshorne-ample bundles on compact complex surfaces. We need to use two known facts.
\begin{enumerate}
\item Let $E$ be a holomorphic vector bundle of rank $r$ over a compact complex surface $X$. Let $L=\mathcal{O}_E(1)$ be the canonical bundle over $\mathbb{P}(E)$. At the level of classes it is well known that $\pi_{*}([c_1(L)^{r}]) = [c_1(E)]$ and $\pi_{*}([c_1 ^{r+1} (L)]) = [c_1^2(E)-c_2(E)]$ where $\pi_{*}$ is the fibre-integral. As a consequence (see lemma $4.1.1$ from \cite{Gulerelec} for instance) it follows that $c_1(E)$ and $c_1^2(E)-c_2(E)$ have positive representatives\footnote{It follows from a theorem proven in \cite{Guler, Div} that this equality holds even at the level of Chern-Weil forms for Griffiths-positive bundles.}. In particular, $X$ is projective by the Kodaira embedding theorem.  
\item A theorem of Fulton-Lazarsfeld \cite{FL} (building on the work of Bloch-Gieseker \cite{BG}) shows that for a Hartshorne-ample bundle over a projective surface, $c_1, c_1^2, c_2$ and $ c_1^2-c_2$ are all numerically positive, i.e., when integrated over subvarieties over appropriate dimensions one gets positive numbers. Actually this theorem holds true in greater generality (the only numerically positive classes are positive linear combinations of Schur polynomials of Chern classes). But for our purposes this statement is good enough. 
\end{enumerate}
\emph{Proof of theorem \ref{stable}} :  \\
\indent Let $\omega$ be a metric in the class $c_1(L)$. The assumption of semi-stability with respect to the polarisation $L$ and a theorem of Kobayashi (theorem $10.13$ in \cite{Kob}) shows that the rank-$r$ bundle $E$ is approximately Hermite-Einstein, i.e., for every given $1>\epsilon>0$, there exists a metric $h_{0, \epsilon}$ satisfying
\begin{gather}
\displaystyle \Vert \frac{\sqrt{-1} \Theta_{0, \epsilon} \wedge \omega}{ \omega^2}-\lambda Id \Vert < \epsilon,
\label{HE}
\end{gather}
where $\Theta_{0,\epsilon}$ is the curvature of the Chern connection of $h_{0,\epsilon}$, and $\lambda =\frac{\displaystyle \int _X c_1(E) \omega}{r\displaystyle \int_X \omega^2}$ is a positive constant. For this metric the proof of the Kobayashi-L\"ubke inequality  (theorem $5.7$ in \cite{Kob}) shows that
\begin{gather}
(r-1)c_1(E,h_{0,\epsilon})^2 -2r c_2 (E,h_{0,\epsilon}) \leq C \epsilon 
\omega^2,
\label{KL}
\end{gather}
where $C$ is a constant depending only on $\lambda, r$. \\
\indent We supress the dependence on $\epsilon$ from now. It is not clear that the first Chern form satisfies $c_1(E,h_{0,\epsilon})>0$. If it did, then $c_2$ would be positive as well. We conformally change the metric $h=h_0 e^{-\phi}$ in the hope that for appropriately chosen $\phi$ this new metric satisfies the conditions of the theorem. We compute the new Chern-Weil forms : 
\begin{gather}
\Theta _h = \Theta _0 + \pbp \phi Id \nonumber \\
c_1 (h) =c_1(h_0) + r \ddc \phi \nonumber \\
c_2 (h) = c_2 (h_0) + (r-1) \ddc \phi \wedge c_1 (h_0) + \frac{r(r-1)}{2} \left(\ddc \phi \right)^2.
\label{newchern}
\end{gather}
Using equations \ref{newchern} we compute the following linear combination of Chern forms (the second Segre form). 
\begin{gather}
c_1 ^2 (h) - c_2(h) = \left(c_1(h_0) + r \ddc \phi \right) ^2 \nonumber \\  -   \left(c_2 (h_0) + (r-1) \ddc \phi \wedge c_1 (h_0) + \frac{r(r-1)}{2} \left(\ddc \phi \right)^2\right ) \nonumber \\
= c_1 ^2 (h_0) - c_2(h_0) + (r+1) \ddc \phi \wedge c_1 (h_0) +\frac{r(r+1)}{2}\left(\ddc \phi \right)^2\nonumber \\
= c_1 ^2 (h_0) - c_2(h_0) +\frac{r(r+1)}{2} \left (\ddc \phi + \frac{c_1 (h_0)}{r} \right )^2 - \frac{(r+1) c_1(h_0)^2 }{2r} \nonumber \\
= \frac{(r-1)c_1^2(h_0)-2r c_2(h_0)}{2r} +\frac{r(r+1)}{2} \left (\ddc \phi + \frac{c_1 (h_0)}{r} \right )^2 .
\label{lincom}
\end{gather}
As mentioned earlier, we know that $c_1 ^2 -c_2 $ has a positive representative $\eta$. Therefore we may attempt to find a $\phi$ solving 
\begin{gather}
\frac{r(r+1)}{2} \left (\ddc \phi + \frac{c_1 (h_0)}{r} \right )^2 = \eta + \frac{2r c_2(h_0)-(r-1)c_1^2(h_0)}{2r}. 
\label{MAphi}
\end{gather}
Thanks to inequality \ref{KL} we see that for sufficiently small $\epsilon$ the right hand side is positive. The class $[c_1(h_0)]$ is a K\"ahler class. Moreover, the integrals of both sides of the equation are equal. Indeed,
\begin{gather}
\displaystyle \int _X \frac{r(r+1)}{2} \left (\ddc \phi + \frac{c_1 (h_0)}{r} \right )^2 = \int _X \frac{r(r+1)}{2}  \left ( \frac{c_1 (h_0)}{r} \right )^2 \nonumber \\
\displaystyle \int_X \left ( \eta + \frac{2r c_2(h_0)-(r-1)c_1^2(h_0)}{2r} \right ) = \int_X \left ( c_1^2-c_2 + \frac{2r c_2-(r-1)c_1^2}{2r} \right ) \nonumber \\
= \int _X \frac{r(r+1)}{2}  \left ( \frac{c_1 (h_0)}{r} \right )^2.
\end{gather}
Therefore, by Yau's solution of the Calabi conjecture \cite{Yau} we have a smooth function $\phi$ solving equation \ref{MAphi} such that $\ddc \phi + \frac{c_1 (h_0)}{r} = \frac{c_1 (h)}{r}>0$. Computing the second Chern form using equations \ref{MAphi} and \ref{newchern} we see that
\begin{gather}
c_2 (h) = c_2 (h_0) +\frac{r-1}{r+1} \left (\eta + c_2 (h_0)-c_1^2(h_0) \right ) 
= \frac{2rc_2 (h_0) - (r-1)c_1 ^2 (h_0) + (r-1) \eta}{r+1}, \nonumber
\end{gather}
which is positive for small $\epsilon$. Therefore we have successfully found a metric $h$ on the bundle $E$ satisfying
\begin{gather}
\Vert \frac{\sqrt{-1} \Theta_h \wedge \omega}{\omega^2} -(\lambda+\Delta \phi) Id \Vert < \epsilon , \ c_1(h) >0, \  c_2 (h)>0, \  and \ c_1^2 (h) - c_2 (h) >0. \nonumber
\end{gather}
\qed \\
In addition, we observe that
\begin{gather}
(r-1)c_1 ^2 (h)-2r c_2 (h) = (r-1) \left(c_1(h_0) + r \ddc \phi \right) ^2  \nonumber \\ -  2r \left(c_2 (h_0) + (r-1) \ddc \phi \wedge c_1 (h_0) + \frac{r(r-1)}{2} \left(\ddc \phi \right)^2\right ) \nonumber \\
=(r-1)c_1 ^2 (h_0)-2r c_2 (h_0) \leq C\epsilon \omega^2.
\label{inv}
\end{gather}
It is tempting to hope that given the large number of conditions being satisfied, the curvature $\Theta_h$ of the metric $h$ is Griffiths positive. But it is not clear and we suspect that it is unlikely.
\section{Chern forms of a vortex bundle}
In order to prove theorem \ref{KYM} we need to calculate the curvature of an $SU(2)$-invariant metric $H$ on $E$. We follow the calculations in \cite{Garcia1, Oscar}. Consider the metric $H=h_1 \oplus f_2 \frac{8\pi}{\tau}\frac{dz\otimes d\bar{z}}{(1+\vert z \vert^2)^2}$ on $\pi_1^{*}L \oplus \pi_2 ^{*} \mathcal{O}(2)$ where $h_1$ is a metric on $L$ over $\Sigma$, and $f_2$ is a function on $\Sigma$. For future use, define $h=\frac{h_1}{f_2}$ and $g_2 = f_2 \frac{8\pi}{\tau}\frac{dz\otimes d\bar{z}}{(1+\vert z \vert^2)^2}$. Notice that every holomorphic structure $E$ on the smooth complex bundle $\pi_1^{*}L \oplus \pi_2 ^{*} \mathcal{O}(2)$ can be given by an element $\beta \in H^1(X,\pi_1^{*}L \otimes \pi_2^{*} \mathcal{O}(2)) = H^{0}(\Sigma, L)$. Every such $\beta$ is of the form 
\begin{gather}
\beta =\pi_1^{*} \phi \otimes \pi_2^{*} \zeta, \nonumber 
\end{gather}
where $\phi \in H^0(\Sigma, L)$ and $\zeta = \frac{\sqrt{8\pi}}{\tau} \frac{dz\otimes d\bar{z}}{(1+\vert z \vert^2)^2}$. In an orthonormal frame the Chern connection on $E$ associated to $H$ and $\beta$ is of the following form. \begin{gather}
A= \left (\begin{array}{cc}A_{h_1} & \beta \\ -\beta^{*} & A_{g_2}\end{array} \right ) \nonumber.
\end{gather}
The curvature is
\begin{gather}
\Theta =  \left (\begin{array}{cc}\Theta_{h_1} - \beta \wedge \beta^{*} & \nabla ^{(1,0)}\beta \\ -\nabla^{(0,1)}\beta^{*} & \Theta_{g_2} - \beta^{*} \wedge \beta \end{array} \right ). 
\label{Curv} 
\end{gather}
We note that \cite{Oscar}
\begin{gather}
\beta \wedge \beta^{*} = \frac{\sqrt{-1}}{\tau} \vert \phi \vert_h ^2 \omega_{FS} \nonumber \\
\nabla ^{(1,0)}\beta \wedge \nabla^{(0,1)}\beta^{*} = - \frac{\sqrt{-1}}{\tau} \nabla^{(1,0)} \phi \wedge \nabla ^{(0,1)}\phi^{*} \wedge \omega_{FS},
\label{oscaruseful}
\end{gather}
where as before, $h=\frac{h_1}{f_2}$. Upon calculation (in normal coordinates) we see that
\begin{gather}
\pbp \vert \phi \vert_h ^2 = -\Theta _h \vert \phi \vert_h ^2 + \nabla^{(1,0)} \phi \wedge \nabla ^{(0,1)}\phi^{*}.
\label{normalcood}
\end{gather}
Now we compute the terms in equation \ref{pKYM}.
\begin{gather}
\sqrt{-1} \Theta  \Omega -\lambda \Omega^2 \ Id \nonumber \\ =  \left ( \begin{array}{cc} \sqrt{-1} \Theta_{h_1} \frac{4}{\tau}\omega_{FS} +\frac{\vert \phi \vert_h ^2}{\tau} \omega_{FS}\omega_{\Sigma} - 2\lambda \frac{4}{\tau} \omega_{\Sigma} \omega_{FS}& 0 \\ 0 & (\sqrt{-1}\Theta_{f_2}+2\omega_{FS}) \Omega -\frac{\vert \phi \vert_h ^2}{\tau} \omega_{FS}\omega_{\Sigma} - 2\lambda \frac{4}{\tau} \omega_{\Sigma} \omega_{FS}\end{array} \right ) \nonumber \\
= \frac{4}{\tau}\omega_{FS} \left ( \begin{array}{cc}  \sqrt{-1}(\Theta_{h}+\Theta_{f_2})  +(\frac{\vert \phi \vert_h ^2}{4} -2\lambda)\omega_{\Sigma}& 0 \\ 0 & \sqrt{-1}\Theta_{f_2}+(\frac{\tau}{2} -\frac{\vert \phi \vert_h ^2}{4}  - 2\lambda)  \omega_{\Sigma} \end{array} \right ) 
\label{termsinpKYMone}
\end{gather}
\begin{gather}
\mathrm{ch}_2(E,H) = -\frac{1}{2(2\pi)^2} \mathrm{Tr}(\Theta^2) \nonumber \\
= -\frac{1}{2(2\pi)^2} \left ((\Theta_{h_1}-\beta\wedge \beta^{*})^2-2\nabla ^{(1,0)}\beta \wedge \nabla^{(0,1)}\beta^{*}+(\Theta_{g_2} - \beta^{*} \wedge \beta)^2 \right ) \nonumber \\
=\frac{\sqrt{-1}}{\tau(2\pi)^2} \left (\Theta_{h_1}\vert \phi \vert _h ^2 \omega_{FS} -\nabla^{(1,0)} \phi \wedge \nabla ^{(0,1)}\phi^{*} \wedge \omega_{FS}+2\tau\Theta_{f_2} \omega_{FS} -\Theta_{f_2}\vert \phi \vert _h ^2 \omega_{FS}\right )\nonumber \\
=\frac{\sqrt{-1}}{\tau(2\pi)^2}\omega_{FS} \left (\Theta_{h}\vert \phi \vert _h ^2  -\nabla^{(1,0)} \phi \wedge \nabla ^{(0,1)}\phi^{*} +2\tau\Theta_{f_2} \right )
= \frac{\sqrt{-1}}{\tau(2\pi)^2}\omega_{FS} \left (-\pbp \vert \phi \vert_h^2 +2\tau\Theta_{f_2} \right ).
\label{termsinpKYMtwo}
\end{gather}
Since $\eta$ is $SU(2)$-invariant, it is of the form $\eta =\frac{8}{\tau} f \wedge \omega_{FS}$ where $f$ is the pull-back of a volume form from $\Sigma$. Therefore we calculate the second equation in the Calabi-Yang-Mills equations as follows.
\begin{gather}
\Omega^2 + \alpha \mathrm{ch}_2 (E,H) - \eta = \omega_{FS} \left(\frac{8}{\tau}\omega_{\Sigma}+\frac{\sqrt{-1}\alpha}{\tau(2\pi)^2} (-\pbp \vert \phi \vert_h^2 +2\tau\Theta_{f_2}) -\frac{8}{\tau}f \right ).
\label{termsinpKYMtwocontd}
\end{gather} 
For the record we note that (upon integration on both sides) $\lambda = \frac{\tau}{8} + \frac{c_1(L)\pi}{2\mathrm{Vol}(\Sigma)}$. We now proceed to prove theorem \ref{KYM}. \\

\emph{Proof of part 1) of theorem \ref{KYM}} : Since $\int \Omega^2 = \int \eta$ it is clear that $\frac{8}{\tau}\omega_{\Sigma} - \frac{8}{\tau}f = \frac{\alpha}{\tau (2\pi)^2}\spbp u$ for some function $u$. Using equation \ref{termsinpKYMtwocontd} we see that the right-hand side is zero if and only if $\vert \phi \vert_h^2 +2\tau \ln (f_2) = u$. Therefore we can easily choose (an infinite number of) $f_2$ satisfying the equation in the first part of theorem \ref{KYM}.
\qed
\\

\emph{Proof of part 2) of theorem \ref{KYM}} : The Calabi-Yang-Mills equations are (using \ref{termsinpKYMone}, \ref{termsinpKYMtwocontd}) 
\begin{gather}
\sqrt{-1}(\Theta_{h}+\Theta_{f_2})  +(\frac{\vert \phi \vert_h ^2}{4} -2\lambda)\omega_{\Sigma} = 0 \nonumber \\
 \sqrt{-1}\Theta_{f_2}+(\frac{\tau}{2} -\frac{\vert \phi \vert_h ^2}{4}  - 2\lambda)  \omega_{\Sigma} = 0 \nonumber \\
8\omega_{\Sigma}+\frac{\sqrt{-1}\alpha}{(2\pi)^2} (-\pbp \vert \phi \vert_h^2 +2\tau\Theta_{f_2}) -8f = 0.
\label{CYM}
\end{gather}
\emph{The case of $\alpha=0$} : When $\alpha=0$, the third equation can be solved trivially for a unique, smooth $\omega_{\Sigma}$. The other two can also be solved for a smooth solution because $E$ is stable \cite{Oscar} and therefore Donaldson's existence theorem \cite{donald} applies. As for uniqueness, from the second equation it is clear that a unique $h$ ensures a unique $f_2$ with zero-average. Substituting the second equation in the first, we get the following equation. 
\begin{gather}
\sqrt{-1}\Theta_h +\left ( \frac{\vert \phi \vert_h ^2 -\tau}{2} \right )\omega_{\Sigma} = 0. 
\label{uni}
\end{gather}
Suppose there are two solutions $\mathbf{h}_1 =\mathbf{h}_0 e^{-\psi_1}$ and $\mathbf{h}_2 = \mathbf{h}_0 e^{-\psi_2}$ where $\mathbf{h}_0$ is a metric on $L$, then upon subtraction we get
\begin{gather}
\spbp (\psi_2-\psi_1) + \frac{\vert \phi \vert_{\mathbf{h}_2} ^2 }{2}\left ( 1-e^{\psi_2-\psi_1} \right ) = 0. 
\label{unicontd}
\end{gather}
At the global maximum of $\psi_2-\psi_1$ we know that $\spbp (\psi_2-\psi_1) \leq 0$. Therefore $1\geq e^{\psi_2-\psi_1}$. This means that $\psi_2 \leq \psi_1$ throughout. Interchanging the roles of $\psi_1$ and $\psi_2$ we see that $\psi_1=\psi_2$. \footnote{Incidentally, equation \ref{unicontd} can be solved using the existence theorem of Kazdan-Warner \cite{KW}. This provides an alternative proof of existence.}\\
\indent Before we proceed to prove openness along this ``continuity path", we note that $\omega_{\Sigma}$ is always positive along this path. Indeed, if $\omega_{\Sigma} (p)=0$ at some point $p$ for the first such value of $\alpha>0$, then the first two equations of \ref{CYM} show that $\Theta_{h_1}=\Theta_{f_2}=0$ at $p$. Choosing normal coordinates at $p$ we see that the third equation implies
\begin{gather}
-\frac{\sqrt{-1}\alpha}{(2\pi)^2} \partial \phi(p) \wedge  \bar{\partial} \bar{\phi} (p) = 8f(p) > 0.
\end{gather}
This is a contradiction. Therefore, as long (measured in $\alpha$) as a solution exists, $\omega_{\Sigma}>0$.  \qed\\

\emph{Openness} : Now we prove openness.  Firstly, we claim that
\begin{gather} 
\vert \phi \vert_h ^2 - \tau \leq 0. 
\label{usefineq}
\end{gather}
Indeed, let the maximum of $\vert \phi \vert_h ^2$ be attained at a point $p$. Then $$0 \geq \spbp \vert \phi \vert_h ^2 (p)  \geq -\Theta_h (p) \vert \phi \vert_h ^2 (p).$$ Using equation \ref{uni} we see that $\vert \phi \vert_h ^2 (p) \leq \tau$. Now consider the following Banach spaces/manifolds of functions on $\Sigma$ equipped with the fixed background metric $f$. $\mathcal{B}_0 ^{k+2,a}$ is the subspace of $C^{k+2,a}$ H\"older functions with zero average, $\mathcal{B}_{>0} ^{k+2,a}$ is the open subset of $\mathcal{B}_0 ^{k+2,a}$ with elements $\varphi$ satisfying $\omega_{\Sigma}=f+\spbp \varphi >0$, and $\mathcal{B}_{sub} ^{k+2,a}$ is the submanifold of $\mathcal{B} ^{k+2,a} \times \mathcal{B}_{>0} ^{k+2,a}$ consisting of pairs of functions $(\psi, \varphi)$ such that $\displaystyle \int_{\Sigma} \vert \phi \vert_{h_0} ^2 e^{-\psi}(f+\spbp \varphi) = \mathrm{Vol}(\Sigma) (2\tau-8\lambda)$. Note that the tangent space $T\mathcal{B}_{sub} ^{k+2,a}$ of $\mathcal{B}_{sub} ^{k+2,a}$ at $(\psi,\varphi)$ consists of functions $\dot{\psi}, \dot{\varphi}$ such that  
\begin{gather}
\displaystyle \int \dot{\varphi} f =0 \nonumber \\
\displaystyle \int \left (-\vert \phi \vert _{h_0}^2 e^{-\psi} \dot{\psi} (f+\spbp \varphi) + \vert \phi \vert _{h_0}^2 e^{-\psi} \spbp \dot{\varphi} \right)=0.
\label{tangent}
 \end{gather}
Define the map $T:B_1  =\mathbb{R} \times \mathcal{B}_{sub} ^{k+2,a} \times \mathcal{B}_0 ^{k+2,a} \rightarrow B_2 = \mathcal{B}_0 ^{k,a} \times \mathcal{B}_0 ^{k,a} \times \mathcal{B}_0 ^{k,a}$ as  
\begin{gather}
T(\alpha, \psi, \varphi,  \psi_2) = (T_1,T_2,T_3) \ where \ \nonumber \\
T_1 = \sqrt{-1}(\Theta_{h_0 e^{-\psi}}+\Theta_{e^{-\psi_2}})  +(\frac{\vert \phi \vert_{h_0}^2 e^{-\psi} }{4} -2\lambda)(f+\spbp \varphi)  \nonumber \\
=\sqrt{-1}(\Theta_{h_0}+\pbp \psi+\pbp \psi_2)  +(\frac{\vert \phi \vert_{h_0}^2 e^{-\psi} }{4} -2\lambda)(f+\spbp \varphi) \nonumber \\
T_2 = \sqrt{-1}\Theta_{e^{-\psi_2}}+(\frac{\tau}{2} -\frac{\vert \phi \vert_{h_0} ^2e^{-\psi}}{4}  - 2\lambda)  (f+\spbp \varphi) \nonumber \\
= \spbp\psi_2+(\frac{\tau}{2} -\frac{\vert \phi \vert_{h_0}^2e^{-\psi}}{4}  - 2\lambda)  (f+\spbp \varphi) \nonumber \\
T_3 = 8(f+\spbp \varphi)+\frac{\sqrt{-1}\alpha}{(2\pi)^2} (-\pbp (\vert \phi \vert_{h_0}^2 e^{-\psi}) +2\tau\Theta_{e^{-\psi_2}}) -8f \nonumber \\
= 8\spbp \varphi+\frac{\sqrt{-1}\alpha}{(2\pi)^2} (-\pbp (\vert \phi \vert_{h_0}^2 e^{-\psi}) +2\tau\pbp \psi_2)  
\end{gather}
Clearly $T$ is a smooth map. Assume that for a given $\alpha \geq 0$ there exists a $\psi, \psi_2, \varphi$ such that $T(\alpha,\psi,\varphi,\psi_2)=0$. We will show that the derivative $DT$ evaluated at this point is a surjective Fredholm operator when acting on $(0,\dot{\psi},  \dot{\varphi}, \dot{\psi_2}) \in 0 \times T\mathcal{B}_{sub} ^{k+2,a}  \times \mathcal{B}_0 ^{k+2,a}$. By the implicit function theorem on Banach manifolds and the Fredholmness of $DT$, this will imply that $T$ is locally onto for an open neighbourhood of $\alpha$.
Indeed, the derivative $DT$ is 
\begin{gather}
DT(0,\dot{\psi}, \dot{\varphi}, \dot{\psi_2})=(S_1, S_2, S_3) \ where \ \nonumber \\
S_1 = \spbp \dot{\psi} + \spbp \dot{\psi_2}-\frac{\vert \phi \vert_h ^2}{4} \dot{\psi} (f+\spbp\phi) + (\frac{\vert \phi \vert_{h_0}^2 e^{-\psi} }{4} -2\lambda) \spbp \dot{\varphi}\nonumber \\
S_2 = \spbp \dot{\psi}_2+\frac{\vert \phi \vert_h ^2}{4} \dot{\psi} (f+\spbp\phi)+(\frac{\tau}{2} -\frac{\vert \phi \vert_{h_0}^2e^{-\psi}}{4}  - 2\lambda)\spbp \dot{\varphi}  \nonumber \\
S_3 = 8 \spbp \dot{\varphi} + \frac{\sqrt{-1}\alpha}{(2\pi)^2} (\pbp (\vert \phi \vert_{h_0}^2 e^{-\psi} \dot{\psi}) +2\tau\pbp \dot{\psi}_2).
\label{derivat}
\end{gather}
We have the following lemma.
\begin{lemma}
The linearisation $DT$ is an elliptic system.
\label{ellipticity}
\end{lemma}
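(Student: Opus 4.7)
The plan is to examine the top-order part of each component of $DT$ in (\ref{derivat}) and verify that the resulting principal symbol is invertible. Every second-order derivative that appears in $S_1, S_2, S_3$ is of the form (coefficient) $\times \spbp$ applied to one of $\dot\psi, \dot\varphi, \dot\psi_2$; in particular the factors $\sqrt{-1}\,\pbp$ in $S_3$ collapse to $\spbp$ with real coefficients. Consequently, at a nonzero covector $\xi \in T^*_x\Sigma$ the symbol factors as $|\xi|^2\,M$, where $M$ is a $3\times 3$ matrix of smooth functions on $\Sigma$, and ellipticity reduces to checking $\det M \neq 0$ pointwise.

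First I would read off $M$ from (\ref{derivat}) with columns indexed by $(\dot\psi, \dot\varphi, \dot\psi_2)$: the $\dot\psi$-column is $(1,\ 0,\ \alpha\vert\phi\vert_h^2/(2\pi)^2)^{T}$, the $\dot\varphi$-column is $(\vert\phi\vert_h^2/4 - 2\lambda,\ \tau/2 - \vert\phi\vert_h^2/4 - 2\lambda,\ 8)^{T}$, and the $\dot\psi_2$-column is $(1,\ 1,\ 2\tau\alpha/(2\pi)^2)^{T}$. The vanishing of the $(2,1)$ entry makes cofactor expansion along the first column straightforward, and after completing the square in $\vert\phi\vert_h^2$ the determinant should take the clean form
\[
\det M \;=\; -8 + \frac{\alpha}{(2\pi)^2}\left[\frac{\tau^2 + (\tau - \vert\phi\vert_h^2)^2}{2} - 4\tau\lambda\right].
\]

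The a priori bound $\vert\phi\vert_h^2 \leq \tau$ from (\ref{usefineq}) then forces $(\tau - \vert\phi\vert_h^2)^2 \leq \tau^2$ pointwise, so
\[
\det M \;\leq\; -8 + \frac{\alpha\tau(\tau - 4\lambda)}{(2\pi)^2} \;=\; -\left[8 + \frac{2\alpha\tau}{(2\pi)^2}\left(2\lambda - \frac{\tau}{2}\right)\right].
\]
Since openness is being established only for $\alpha$ satisfying (\ref{satisfaction}), the bracket on the right is strictly positive, so $\det M < 0$ everywhere on $\Sigma$. The principal symbol is therefore invertible at every nonzero covector, which is precisely ellipticity of $DT$ as a second-order scalar system.

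The main obstacle I anticipate is essentially bookkeeping: the cofactor expansion has to be arranged so that the completion of the square in $\vert\phi\vert_h^2$ emerges naturally, because only then does the pointwise bound from (\ref{usefineq}) collapse into the quantity $8 + \frac{2\alpha\tau}{(2\pi)^2}(2\lambda - \tau/2)$ featured in (\ref{satisfaction}). Any less tidy grouping of the terms would leave a residual $\vert\phi\vert_h^2$-dependent piece that one would have to control separately, obscuring the role of the openness hypothesis.
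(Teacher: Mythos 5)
Your proposal is correct and takes essentially the same route as the paper: you read off the same principal symbol matrix (with the columns ordered $(\dot{\psi},\dot{\varphi},\dot{\psi_2})$ rather than $(\dot{\psi},\dot{\psi_2},\dot{\varphi})$, whence your determinant is exactly the negative of the paper's expression $8+\frac{2\alpha\tau}{(2\pi)^2}\left[\frac{\vert \phi \vert_h^2}{4}+2\lambda-\frac{\tau}{2}\right]+\frac{\alpha}{2(2\pi)^2}\vert \phi \vert_h^2(\tau-\vert \phi \vert_h^2)$), and you conclude nonvanishing from the same two ingredients, namely the bound (\ref{usefineq}) and hypothesis (\ref{satisfaction}). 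The only cosmetic difference is that you bound $(\tau-\vert \phi \vert_h^2)^2\leq\tau^2$ to collapse the determinant to the quantity in (\ref{satisfaction}), whereas the paper splits it into a piece positive by (\ref{satisfaction}) and a piece nonnegative by (\ref{usefineq}); both yield invertibility of the symbol, which is all the lemma asserts.
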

\begin{proof}
The principal symbol of $DT$ is
\begin{gather}
\sqrt{-1} \vert \xi \vert^2 \left [\begin{array}{ccc}
1& 1&(\frac{\vert \phi \vert_{h_0}^2 e^{-\psi} }{4} -2\lambda) \\
0 & 1& (\frac{\tau}{2} -\frac{\vert \phi \vert_{h_0}^2e^{-\psi}}{4}  - 2\lambda) \\
\frac{\alpha}{(2\pi)^2} \vert \phi \vert_{h_0}^2 e^{-\psi}&\frac{2\alpha \tau}{(2\pi)^2} & 8
\end{array} \right ]
\label{princi}
\end{gather}
It is clearly positive-definite if and only if its determinant is positive. The determinant is 
\begin{gather}
8+\frac{2\alpha\tau}{(2\pi)^2} \left [\frac{\vert \phi \vert_h^2}{4} + 2\lambda - \frac{\tau}{2} \right ] + \frac{\alpha}{2(2\pi)^2} \vert \phi \vert _h^2 (\tau-\vert \phi \vert_h^2),\nonumber 
\end{gather}
which is positive-definite because the first term is positive by assumption \ref{satisfaction} and the second by inequality \ref{usefineq}. Therefore the operator is strongly elliptic and is thus Fredholm.
\end{proof}

 By the Fredholm alternative, it is surjective if and only if its formal $L^2$ adjoint $DT^{*}$ from $B_2$ to $B_1$ has a trivial kernel. Indeed the kernel of the formal adjoint consists of functions $u,v,w$ of zero $f$-average such that  
\begin{gather}
\spbp u - \frac{\vert \phi \vert_h ^2 u}{4} (f+\spbp\phi) + \frac{\vert \phi \vert_h ^2 v}{4}(f+\spbp\phi) + \frac{\sqrt{-1}\alpha}{(2\pi)^2}\vert \phi \vert_h ^2 \pbp w  =0 \nonumber \\
\spbp u + \spbp v +\frac{2\alpha\tau}{(2\pi)^2} \spbp w =0 \nonumber \\
\spbp \left [u \left(\frac{\vert \phi \vert_h^2}{4} -2\lambda \right) \right ] + \spbp \left [v \left(\frac{\tau}{2}-\frac{\vert \phi \vert_h^2}{4} -2\lambda \right) \right ]+8\spbp w =0.
\label{adjkern}
\end{gather}
Solving the second and third equations of \ref{adjkern} we get the following equations.
\begin{gather}
\spbp u - \frac{\vert \phi \vert_h ^2 u}{4} (f+\spbp\phi) + \frac{\vert \phi \vert_h ^2 v}{4}(f+\spbp\phi) + \frac{\sqrt{-1}\alpha}{(2\pi)^2}\vert \phi \vert_h ^2 \pbp w  =0 \nonumber \\
u+v=-\frac{2\alpha\tau}{(2\pi)^2}w \nonumber \\
\left [u \left(\frac{\vert \phi \vert_h^2}{4} -2\lambda \right) \right ]+\left [v \left(\frac{\tau}{2}-\frac{\vert \phi \vert_h^2}{4} -2\lambda \right) \right ]+ 8w = constant .
\label{solved}
\end{gather}
Define $q=u-v$. Therefore, 
\begin{gather}
u = \frac{1}{2} \left (q-\frac{2\alpha\tau}{(2\pi)^2}w \right ) \nonumber \\ 
v = -\frac{1}{2} \left ( q+\frac{2\alpha\tau}{(2\pi)^2}w\right ).
\label{quv}
\end{gather}
Writing equations \ref{solved} in terms of $w$ and $q$ we get the following equations.
\begin{gather}
\frac{1}{2}\spbp q -\frac{\vert \phi \vert_h^2}{4} q (f+\spbp\phi) +\frac{\alpha}{(2\pi)^2}(\vert \phi \vert_h^2 -\tau) \spbp w  =0 \nonumber \\
\frac{\vert \phi \vert_h^2 - \tau}{4} q  + w\left(8+ \frac{2\pi c_1(L)\alpha \tau}{(2\pi)^2\mathrm{Vol}(\Sigma)} \right ) =constant.
\label{wandq}
\end{gather}
The following lemma completes the proof.\\

\begin{lemma}
The system of equations \ref{wandq} has a unique smooth solution $q=w=0$. 
\label{completeproof}
\end{lemma}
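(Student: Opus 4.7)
The plan is to reduce the system \ref{wandq} to a single second-order elliptic PDE for $q$, extract an energy identity, and conclude $q\equiv 0$ using the pointwise bound $\vert\phi\vert_h^2\le\tau$ (inequality \ref{usefineq}) together with condition \ref{satisfaction}. Once $q\equiv 0$, the algebraic second equation of \ref{wandq} will force $w\equiv 0$ via the zero $f$-average constraint.

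Set $\mu:=\vert\phi\vert_h^2-\tau\le 0$ and $K:=8+\tfrac{2\pi c_1(L)\alpha\tau}{(2\pi)^2\mathrm{Vol}(\Sigma)}>0$. The second equation of \ref{wandq} then reads $Kw = C-\mu q/4$ for some constant $C$, so in particular $4K\,dw = -d(\mu q)$. Multiplying the first equation of \ref{wandq} by $q$, integrating by parts over $\Sigma$, and substituting $\mu q = 4(C-Kw)$ inside $\int q\mu\,\spbp w$ (the additive constant $C$ drops since $\int\spbp w = 0$) yields
\begin{equation*}
\frac{4\alpha K}{(2\pi)^2}\int_\Sigma\vert dw\vert^2 \;=\; \frac{1}{2}\int_\Sigma\vert dq\vert^2 + \frac{1}{4}\int_\Sigma\vert\phi\vert_h^2 q^2\,\omega_\Sigma.
\end{equation*}
Using $dw = -\tfrac{1}{4K}d(\mu q)$ and the integration-by-parts identity $\int\vert d(\mu q)\vert^2 = \int\mu^2\vert dq\vert^2-\int q^2\mu\,\spbp\mu$, this becomes
\begin{equation*}
\int_\Sigma\vert dq\vert^2\!\Bigl(\tfrac12-\tfrac{\alpha\mu^2}{4K(2\pi)^2}\Bigr) + \frac{1}{4}\int_\Sigma\vert\phi\vert_h^2 q^2\omega_\Sigma + \frac{\alpha}{4K(2\pi)^2}\int_\Sigma q^2\,\mu\,\spbp\mu \;=\; 0.
\end{equation*}

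By the proof of Lemma \ref{ellipticity}, condition \ref{satisfaction} combined with \ref{usefineq} ensures the pointwise coefficient $\tfrac12-\tfrac{\alpha\mu^2}{4K(2\pi)^2}$ is strictly positive. Equation \ref{uni} for the background solution gives $\sqrt{-1}\Theta_h = -\tfrac{\mu}{2}\omega_\Sigma$, which together with \ref{normalcood} yields $\spbp\mu = \tfrac{\mu\vert\phi\vert_h^2}{2}\omega_\Sigma + \sqrt{-1}\,\nabla^{(1,0)}\phi\wedge\nabla^{(0,1)}\phi^{*}$. The integrand $q^2\mu\,\spbp\mu$ therefore splits into a manifestly non-negative piece $q^2\tfrac{\mu^2\vert\phi\vert_h^2}{2}\omega_\Sigma$ and a non-positive piece $q^2\mu\sqrt{-1}\,\nabla^{(1,0)}\phi\wedge\nabla^{(0,1)}\phi^{*}$. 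The plan is to rewrite the latter via one further integration by parts, converting $\int q^2\sqrt{-1}\,\nabla^{(1,0)}\phi\wedge\nabla^{(0,1)}\phi^{*}$ into an expression involving $\int q\,\sqrt{-1}\,\partial q\wedge\bar\partial\mu$ and $\int q^2\mu\vert\phi\vert_h^2\omega_\Sigma$, and then to absorb the $\partial q$-cross term into the $\int\vert dq\vert^2$-integral by Cauchy--Schwarz, exploiting $\vert\mu\vert\le\tau$ and the slack in \ref{satisfaction}. Every surviving term is then non-negative, so each must vanish; in particular $\int q^2\vert\phi\vert_h^2\omega_\Sigma = 0$ forces $q=0$ wherever $\phi\ne 0$, and $\int\vert dq\vert^2 = 0$ then gives $q\equiv 0$. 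The second equation of \ref{wandq} then reduces to $Kw\equiv C$, and $\int wf = 0$ forces $C=0$, so $w\equiv 0$. Elliptic regularity for the reduced PDE supplies smoothness.

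The main obstacle is the absorption step: the negative contribution coming from $\mu\sqrt{-1}\,\nabla^{(1,0)}\phi\wedge\nabla^{(0,1)}\phi^{*}$ persists near the zeros of $\phi$, precisely where the natural positive control from $\vert\phi\vert_h^2 q^2\omega_\Sigma$ degenerates. Closing this gap will require carefully matching the orders of vanishing of $\vert\phi\vert_h^2$ and $\vert d\mu\vert^2$ at these zeros and pushing through the quantitative slack in \ref{satisfaction}; this is the step I expect to be most delicate.
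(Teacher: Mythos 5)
Your opening moves coincide with the paper's: eliminating $w$ through the second equation of \ref{wandq}, multiplying by $q$ and integrating (your energy identity is the same as \ref{multintqeq} after the substitution $\mu q=4(C-Kw)$, with $\mu=\vert\phi\vert_h^2-\tau$ and $K=8+\tfrac{c_1(L)\alpha\tau}{2\pi\mathrm{Vol}(\Sigma)}$), the use of \ref{satisfaction} and \ref{usefineq} to make the coefficient $\tfrac12-\tfrac{\alpha\mu^2}{4K(2\pi)^2}$ strictly positive, and the final deduction $q\equiv 0\Rightarrow w\equiv 0$. But the proof is not complete: the step you yourself flag as the ``main obstacle'' is exactly where the content of the lemma lies, and your reformulation makes it strictly harder. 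By integrating the cross term by parts into $\int q^2\mu\,\spbp\mu$ you discard the factor $\vert\phi\vert_h^2$ that the paper retains through the pointwise Cauchy--Schwarz bound $\sqrt{-1}\,\partial\vert\phi\vert_h^2\wedge\bar\partial\vert\phi\vert_h^2\le\vert\phi\vert_h^2\,\sqrt{-1}\,\nabla^{(1,0)}\phi\wedge\nabla^{(0,1)}\phi^{*}$, and you never invoke the third equation of \ref{CYM}, which is the essential extra input: the paper uses it (together with \ref{normalcood}, \ref{uni} and $\lambda=\tfrac{\tau}{8}+\tfrac{c_1(L)\pi}{2\mathrm{Vol}(\Sigma)}$) to convert $\sqrt{-1}\,\nabla^{(1,0)}\phi\wedge\nabla^{(0,1)}\phi^{*}$ into zeroth-order quantities whose $\omega_\Sigma$-coefficient is exactly $K$, producing the cancellation against $-K\int\vert\phi\vert_h^2q^2\omega_\Sigma$ in \ref{aftersubcsqafter}, after which the mixed term is absorbed by the weighted Cauchy--Schwarz with the explicit weight $g$ of \ref{truecs}.

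To see that your plan, as stated, cannot close, substitute those same equations into your identity: since $\spbp\mu=\spbp\vert\phi\vert_h^2$ is given directly by the third equation of \ref{CYM} and the second equation gives $\sqrt{-1}\Theta_{f_2}=\bigl(\tfrac{\mu}{4}+\tfrac{c_1(L)\pi}{\mathrm{Vol}(\Sigma)}\bigr)\omega_\Sigma$, your energy identity becomes exactly
\begin{equation*}
0=\int\Bigl(\tfrac12-\tfrac{\alpha\mu^2}{4K(2\pi)^2}\Bigr)\sqrt{-1}\,\partial q\wedge\bar\partial q+\frac14\int\bigl(2\vert\phi\vert_h^2-\tau\bigr)q^2\,\omega_\Sigma+\frac{\alpha\tau}{8K(2\pi)^2}\int\mu^2q^2\,\omega_\Sigma+\frac{2}{K}\int\bigl(\tau-\vert\phi\vert_h^2\bigr)q^2 f .
\end{equation*}
At the zeros of $\phi$ (which exist since $c_1(L)>0$) the second integrand is $-\tfrac{\tau}{4}q^2\omega_\Sigma$ and, by \ref{satisfaction}, the third is strictly smaller than $\tfrac{\tau}{4}q^2\omega_\Sigma$; the only remaining positive term is weighted by $f$, so a sign conclusion would require a pointwise comparison of $f$ with $\omega_\Sigma$ that is not available. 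Likewise, ``one further integration by parts'' only regenerates $\int q^2\,\sqrt{-1}\,\partial\mu\wedge\bar\partial\mu$-type terms, so the degeneration near the zeros of $\phi$ that you point out is a genuine obstruction to your route, not a technicality. Closing the argument amounts to backing up to the form $\int\sqrt{-1}\,\partial(\mu q)\wedge\bar\partial(\mu q)$, keeping the term $q^2\,\sqrt{-1}\,\partial\vert\phi\vert_h^2\wedge\bar\partial\vert\phi\vert_h^2$ so as to gain the factor $\vert\phi\vert_h^2$ by Cauchy--Schwarz, feeding in the third Calabi--Yang--Mills equation, and then choosing the weight $g$ -- i.e.\ carrying out the paper's estimate.
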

\begin{proof}
\vspace{-0.1in}
Substituting the second equation of \ref{wandq} in the first, we get the following equation for $q$.
\begin{gather}
\frac{1}{2}\spbp q -\frac{\vert \phi \vert_h^2}{4} q \omega_{\Sigma} -\frac{\alpha (\vert \phi \vert_h^2 -\tau)}{(2\pi)^2 \left(8+ \frac{c_1(L)\alpha \tau}{2\pi\mathrm{Vol}(\Sigma)}\right)} \spbp \left ( \frac{\vert \phi \vert_h^2 - \tau}{4} q\right )  =0.
\label{qeq}
\end{gather}

Multiplying equation \ref{qeq} by $q$ and integrating-by-parts we arrive at
\begin{gather}
-2 \left(8+ \frac{ c_1(L)\alpha \tau}{2\pi\mathrm{Vol}(\Sigma)}\right) \displaystyle \int \sqrt{-1}\partial q \wedge \bar{\partial} q -  \left(8+ \frac{ c_1(L)\alpha \tau}{2\pi\mathrm{Vol}(\Sigma)}\right)\int \vert \phi \vert_h^2 q^2  \omega_{\Sigma}\nonumber \\ +\frac{\alpha}{(2\pi)^2}\int \sqrt{-1}\partial \left( (\vert \phi \vert_h^2 -\tau)q \right) \wedge \bar{\partial} \left ( (\vert \phi \vert_h^2 - \tau) q\right )  =0.
\label{multintqeq}
\end{gather}
Since $\vert a+b \vert ^2 = \frac{\sqrt{-1}(a+b)\wedge (\bar{a}+\bar{b})}{\omega_{\Sigma}} \leq \vert a\vert^2 + \vert b \vert^2 + 2\vert a\vert\vert b \vert$ we see that
\begin{gather}
\sqrt{-1}\partial \left( (\vert \phi \vert_h^2 -\tau)q \right) \wedge \bar{\partial} \left ( (\vert \phi \vert_h^2 - \tau) q\right ) \nonumber \\
\leq (\sqrt{-1}q^2\vert \phi \vert_h^2 \nabla ^{1,0} \phi \wedge \nabla ^{0,1} \phi^{*} + \sqrt{-1}(\vert\phi \vert_h^2 - \tau)^2 \partial q \wedge \bar{\partial} q) + 2\vert a \vert \vert b \vert \omega_{\Sigma} \nonumber \\
= (\sqrt{-1}q^2\vert \phi \vert_h^2 (\pbp \vert \phi \vert_h^2 + \vert \phi \vert_h^2 \Theta_h) + \sqrt{-1}(\vert\phi \vert_h^2 - \tau)^2 \partial q \wedge \bar{\partial} q) + 2 \vert a \vert \vert b \vert \omega_{\Sigma}. 
\label{csq}
\end{gather}
Using equations \ref{CYM} and \ref{uni} we get
\begin{gather}
\sqrt{-1}\partial \left( (\vert \phi \vert_h^2 -\tau)q \right) \wedge \bar{\partial} \left ( (\vert \phi \vert_h^2 - \tau) q\right ) 
\leq \sqrt{-1}(\vert\phi \vert_h^2 - \tau)^2 \partial q \wedge \bar{\partial} q\nonumber \\+ \vert \phi \vert_h^2  q^2 \Bigg(\vert \phi \vert_h^2 \frac{\tau-\vert \phi \vert_h^2}{2}\omega_{\Sigma} + \frac{8(2\pi)^2(\omega_{\Sigma}-f)}{\alpha}+2\tau \left (-\frac{\tau}{4}+\frac{c_1(L)\pi}{\mathrm{Vol}(\Sigma)}+\frac{\vert \phi \vert_h^2}{4}\right)\omega_{\Sigma}\Bigg) + 2 \vert a \vert \vert b \vert \omega_{\Sigma} \nonumber \\
\leq \sqrt{-1}(\vert\phi \vert_h^2 - \tau)^2 \partial q \wedge \bar{\partial} q+ + 2 \vert a \vert \vert b \vert \omega_{\Sigma} \nonumber \\+ \vert \phi \vert_h^2  q^2 \Bigg(\vert \phi \vert_h^2 \frac{\tau-\vert \phi \vert_h^2}{2}\omega_{\Sigma} + \frac{8(2\pi)^2\omega_{\Sigma}}{\alpha}+2\tau \left (-\frac{\tau}{4}+\frac{c_1(L)\pi}{\mathrm{Vol}(\Sigma)}+\frac{\vert \phi \vert_h^2}{4}\right)\omega_{\Sigma}\Bigg).  
\label{csqafter}
\end{gather}
Note that equality holds in the last inequality of \ref{csqafter} if and only if $q=0$. Substituting \ref{csqafter} in \ref{multintqeq} and simplifying we get
\begin{gather}
0\leq \left (-2 \left(8+ \frac{ c_1(L)\alpha \tau}{2\pi\mathrm{Vol}(\Sigma)}\right) + \frac{\alpha(\vert \phi \vert_h^2 -\tau)^2)}{4\pi^2} \right )\displaystyle \int \sqrt{-1}\partial q \wedge \bar{\partial} q +2\frac{\alpha}{(2\pi)^2}\int \vert a \vert \vert b \vert \omega_{\Sigma} \nonumber \\ -  \frac{\alpha}{8\pi^2}\int \vert \phi \vert_h^2 q^2 (\tau-\vert\phi\vert_h^2)^2  \omega_{\Sigma}
\label{aftersubcsqafter}
\end{gather}
Using the Cauchy-Schwartz inequality on the mixed term we get
\begin{gather}
0 \leq \left (-2 \left(8+ \frac{ c_1(L)\alpha \tau}{2\pi\mathrm{Vol}(\Sigma)}\right) + \frac{\alpha(\vert \phi \vert_h^2 -\tau)^2)}{4\pi^2} \right )\displaystyle \int \sqrt{-1}\partial q \wedge \bar{\partial} q  -  \frac{\alpha}{8\pi^2}\int \vert \phi \vert_h^2 q^2 (\tau-\vert\phi\vert_h^2)^2  \omega_{\Sigma} \nonumber \\
+\displaystyle \frac{\alpha}{(2\pi)^2}\int \frac{\sqrt{-1} a \wedge \bar{a}}{g} + \frac{\alpha}{(2\pi)^2}\int g \sqrt{-1} b\wedge \bar{b},  
\label{truecs}
\end{gather}
where we choose the function $g$ to be (again using assumption \ref{satisfaction})
\begin{gather}
g=\frac{\alpha(\tau-\vert \phi \vert_h^2)^2}{8\pi^2} \frac{1}{-\frac{\alpha(\tau-\vert \phi \vert_h^2)^2}{8\pi^2} + 8+\frac{c_1(L)\tau \alpha}{2\pi \mathrm{Vol}(\Sigma)}}.
\end{gather}
Upon substitution of this $g$ into \ref{truecs} and simplifying we get $0\leq 0$ and therefore $q=0$. This means that $w=0$ and hence $u=v=0$. 
\end{proof}
This ends the proof of openness of the set of $\alpha$ (satisfying the assumptions of the theorem) for which the equation has a solution. \qed\\

\emph{Regularity} : In order to complete the proof we need to address the issue of regularity of solutions, i.e., if there is a $C^{2,a}$ solution of \ref{CYM} (where $\omega_{\Sigma}$ is in $C^{0,a}$), is 
it smooth ? (Since this is a semilinear system of equations we need to be a little careful.) Using the second and third equations of \ref{CYM} we can solve for $\omega_{\Sigma}$ as follows.
\begin{gather}
\omega_{\Sigma} \left [4+ \frac{\tau \alpha}{(2\pi)^2} \left(2\lambda-\frac{\tau}{2}+\frac{\vert \phi \vert_h^2}{4}\right) \right ] = 4f + \frac{\sqrt{-1}\alpha}{2(2\pi)^2}\pbp \vert \phi \vert_h^2 .
\label{omegasig}
\end{gather}  
Note that
\begin{gather}
4+ \frac{\tau \alpha}{(2\pi)^2}(2\lambda-\frac{\tau}{2})>0
\end{gather}
by assumption \ref{satisfaction}. Substituting equation \ref{omegasig} in \ref{uni} and using \ref{normalcood} we get
\begin{gather}
\sqrt{-1}\Theta_h + \frac{\vert \phi \vert_h^2 - \tau}{2}\frac{4f + \frac{\sqrt{-1}\alpha}{2(2\pi)^2} ( -\Theta _h \vert \phi \vert_h ^2 + \nabla^{(1,0)} \phi \wedge \nabla ^{(0,1)}\phi^{*})}{4+ \frac{\tau \alpha}{(2\pi)^2} (2\lambda-\frac{\tau}{2}+\frac{\vert \phi \vert_h^2}{4})}=0.
\label{onesys}
\end{gather}
Writing $h=h_0 e^{-\psi}$ we get an equation of the form
\begin{gather}
F_1(e^{-\psi},x)\Delta \psi = F_2(e^{-\psi},\nabla \psi,x)
\label{oftheform}
\end{gather}
where $F_1>0$. Therefore, if $\psi$ is in $C^{2,a}$ then since the right hand side is in $C^{1,a}$ by elliptic regularity $\psi$ is in $C^{3,a}$. By bootstrapping $\psi$ is smooth. This clearly implies that $\omega_{\Sigma}$ and $f_2$ are also smooth. \qed

\end{document}